\title{The Homogeneous Spectrum of Milnor-Witt $K$-Theory}
\author{Riley Thornton}
\address{Reed College}
\email{rithorn@reed.edu}
 \subjclass[2010]{14F42, 19G12, 18E30}
\keywords{Milnor-Witt $K$-theory, tensor triangular geometry, stable motivic homotopy theory}
\newtheorem{thm}{Theorem}[section]
\newtheorem{lem}[thm]{Lemma}
\newtheorem{cor}[thm]{Corollary}
\theoremstyle{definition}
\newtheorem{dfn}[thm]{Definition}
\theoremstyle{remark}
\newcommand{\km}[1]{K^{M}_{#1}}
\newcommand{\kmw}[1]{K^{MW}_{#1}}
\newcommand{\N}{\mathbb{N}}
\newcommand{\Z}{\mathbb{Z}}
\newcommand{\inv}{^{-1}}
\newcommand{\ip}[1]{\langle {#1} \rangle}
\newcommand{\pf}[1]{\langle\langle{#1}\rangle\rangle}
\newcommand{\chr}{\operatorname{char}}
\newcommand{\spec}{\operatorname{Spec}}
\newcommand{\mspc}{\operatorname{MinSpec}^h}
\newcommand{\sgn}{\operatorname{sgn}}
\newcommand{\SHmot}{\operatorname{SH}^{\mathbb{A}^1}}
\newcommand{\Spc}{\operatorname{Spc}}
\begin{document}

\begin{abstract}
For any field $F$ (of characteristic not equal to 2), we determine the Zariski spectrum of homogeneous prime ideals in $\kmw*(F)$, the Milnor-Witt $K$-theory ring of $F$.  As a corollary, we recover Lorenz and Leicht's classical result on prime ideals in the Witt ring of $F$.  Our computation can be seen as a first step in Balmer's program for studying the tensor triangular geometry of the stable motivic homotopy category.
\end{abstract}

\maketitle

\section{Introduction}\label{sec:intro}

In this note we completely determine the Zariski spectrum of homogeneous prime ideals in $\kmw*(F)$, the Milnor-Witt $K$-theory of a field $F$.  This graded ring contains information related to quadratic forms over $F$ --- in fact, $\kmw0(F)\cong GW(F)$, the Grothendieck-Witt ring of $F$ --- and the Milnor $K$-theory of $F$, which appears as a natural quotient of $\kmw*(F)$.  While the prime ideals in $GW(F)$ are known classically via a theorem of Lorenz and Leicht \cite{LL} (see also \cite[Remark 10.2]{Ba}), we discover a more refined structure in $\spec^h(\kmw*(F))$, including a novel class of characteristic $2$ primes indexed by the orderings on $F$ which all collapse to the fundamental ideal $I\subseteq GW(F)$ in degree $0$.

Much of the interest in $\kmw*(F)$ stems from the distinguished role it plays in Voevodsky's stable motivic homotopy category, $\SHmot(F)$.  Indeed, a theorem of Morel \cite[\textsection 6, p 251]{Mo2} identifies $\kmw*(F)$ with a graded ring of endomorphisms of the unit object in $\SHmot(F)$.  As $\SHmot(F)$ is a tensor triangulated category (with tensor given by smash product, $\wedge$), it may be studied via Balmer's methods of tensor triangular geometry \cite{Ba}. More specifically, we can look at the full subcategory of compact objects, $\SHmot(F)^c$. In this context, the goal is to determine the structure of the triangular spectrum $\Spc(\SHmot(F)^c)$ of thick subcategories of $\SHmot(F)^c$ which satisfy a ``prime ideal'' condition with respect to $\wedge$.  Balmer's primary tool in the study of triangular spectra is a naturally defined continuous map
\[
  \rho^\bullet:\Spc(\SHmot(F))\to \spec^h(\kmw*(F))
\]
with codomain the Zariski spectrum of homogeneous prime ideals in $\kmw*(F)$.

By identifying $\spec^h(\kmw*(F))$, we undertake a first step in Balmer's program for studying the tensor triangular geometry of $\SHmot(F)$.  In particular, this raises the possibility of studying surjectivity properties of $\rho^\bullet$ (which, in general, are unknown --- see \cite[Remark 10.5]{Ba}) by explicitly constructing triangular primes lying over points in $\spec^h(\kmw*(F))$.

\subsection*{Acknowledgements}

I thank Kyle Ormsby for advising this research and editing this write up, as well as for many helpful conversations. I thank Aravind Asok for helpful comments on the pullback presentation of Milnor-Witt K-theory, Bob Bruner for pointing out that $L^*$ is essentially a Rees algebra, and the referee for helpful comments. This research was conducted with support under NSF grant DMS-1406327.

\subsection*{Outline of the paper}

The determination of $\spec^h(\kmw*(F))$ proceeds as follows. Section \ref{sec:kmw} gives general background on Milnor-Witt $K$-theory and states our main result. In subsections \ref{subsec:kmw/h} and \ref{subsec:kmw/eta}, the homogeneous spectra of two quotients of $\kmw*(F)$ are determined, and in subsection \ref{subsec:spec} the two quotient spectra are stitched together to get the full spectrum.

\section{Milnor-Witt $K$-Theory}\label{sec:kmw}

The Milnor-Witt $K$-theory of a field, $\kmw*(F)$, is a graded ring associated to a field by taking a certain quotient of the free algebra on a symbol $\eta$ and the set of formally bracketed units in the field as follows:

\begin{dfn}
For a set $S$, $[S]=\{[s]: s\in S\}$ is the set of (purely formal) symbols in $S$.

The free associative algebra on $[F^\times]\cup\{\eta\}$ is
\[ FrAl({[F^\times]\cup\{\eta\}})=\left\{\sum_{1\leq i\leq n} a_i \sigma_{i1},...,\sigma_{ij_i}: \;a_i\in\Z,\; \sigma_{ij}\in [F^\times]\cup\{\eta\},\; n\in\N\right\} \]
with multiplication and addition completely determined by the ring axioms.

Milnor-Witt $K$-theory, $\kmw*(F)$, is the quotient $FrAl([F^\times]\cup\{\eta\})/K$, where $K$ is the ideal generated by
\begin{enumerate}
\item $[ab]-[a]-[b]-\eta[a][b]$ (twisted logarithm),
\item $[a][b]$ where $a+b=1$ (Steinberg relation),
\item $[a]\eta-\eta[a]$ (commutativity), and
\item $(2+[-1]\eta)\eta$ (Witt relation).
\end{enumerate}

We put a grading $\kmw*(F)$ by declaring $\eta$ to be of degree $-1$ and $[a]$ be of degree $1$ for $a\in F^\times$.
\end{dfn}

We will usually suppress the field $F$ in our notation. Note that $\kmw*/(\eta)$ is simply Milnor $K$-theory. From this point on, we will assume that $\operatorname{char}(F)\not=2$. This assumption is needed for Lemmas \ref{lem:fibreproduct} and \ref{lem:lstar} and to make sure the orders defined in this paper are well-defined.

 Morel \cite{Mo} gives a presentation of $\kmw*$ as a pullback of Milnor $K$-theory and a ring~$L^*$ associated to the Witt ring. A correct proof of this presentation in every non-$2$ characteristic can be found in \cite{GSZ} We define $L^*$ and give Morel's pullback presentation below, though we will only make use of this presentation obliquely.

\begin{dfn} The ring $L^*$ is defined by
\[L^*:=\bigoplus_{n\in \Z} I^n\eta^{-n}\subseteq W(F)[\eta,\eta^{-1}]\]
where $W(F)$ is the Witt ring of $F$, $I$ is the fundamental ideal in $W(F)$, and $I^n=W(F)$ for $n\leq 0$ by fiat.
\end{dfn}

Note that $\eta$ has degree $-1$ in $L^*$, and $\eta\inv\not\in L^*$. Note also the similarity between $L^*$ and the Rees algebra of $I$ in $W(F)$; indeed, $L^*$ is essentially a $\Z$-graded version of the Rees algebra, sometimes called an extended Rees algebra. The ring $L^*$ is elsewhere called $I^*$. We adopt this nonstandard notation to avoid confusion between $L^1$ and $I\subseteq L^0$.

The quotient of $L^*$ by $(I)$ is simply the graded ring associated with the $I$-adic filtration of the Witt ring,\footnote{Note that $(I)$ is the ideal generated by the copy of $I$ in $L^0$.} $Gr_I=\bigoplus_{n\in \N} I^n/I^{n+1}$, which is, by the now settled Milnor conjecture \cite{Vo, Mo3}, isomorphic to $\km*/(2)$ via the Milnor map. 

\begin{lem}[Morel, \cite{Mo}; Gille-Scully-Zhong, \cite{GSZ}]\label{lem:fibreproduct}
The Milnor-Witt K-theory ring, $\kmw*$, is isomorphic to the pullback in the diagram below.
\[
\xymatrix{
L^*\times_{Gr_I} \km* \ar[r] \ar[d] & \km* \ar[d]_{q'\circ m} \\
L^*\ar[r]_q & Gr_I
}
\]
where $q$ and $q'$ are the quotients by $(I)$ and $(2)$, respectively, and $m$ is the Milnor map.
\end{lem}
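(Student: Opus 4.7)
The plan is to exploit two complementary quotient relations in $\kmw*$: by definition $\kmw*/(\eta) = \km*$, and Morel's theorem identifies $\kmw*/(h) \cong L^*$ where $h := 2 + [-1]\eta$. The Witt relation $(2+[-1]\eta)\eta = 0$ forces $h\eta = 0$, which is the algebraic glue making the pullback square exact. First, I would build a ring map $\psi : \kmw* \to W(F)[\eta,\eta\inv]$ by declaring $\eta \mapsto \eta$ and $[a] \mapsto \pf{a}\eta\inv$ (with $\pf{a} := \ip{a}-1$). Verifying that each of the four defining relations becomes an identity in $W(F)[\eta,\eta\inv]$ is short: the twisted logarithm unfolds to the Pfister identity $\pf{ab} = \pf{a}+\pf{b}+\pf{a}\pf{b}$, Steinberg becomes $\pf{a}\pf{1-a}=0$, commutativity is automatic, and the Witt relation $(2+[-1]\eta)\eta$ lands on $(\ip{1}+\ip{-1})\eta$, which is zero because the hyperbolic form vanishes in $W(F)$. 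Degree considerations confirm the image lies in $L^*$.

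Next I would check that the square commutes: both $q\circ\psi$ and $q'\circ m$, applied to a length-$n$ symbol $[a_1]\cdots[a_n]$, produce $\pf{a_1}\cdots\pf{a_n}$ modulo $I^{n+1}$, which is the defining property of the Milnor map $m$. The universal property of pullbacks then yields $\Phi : \kmw* \to L^*\times_{Gr_I}\km*$. Surjectivity of $\Phi$ is handled degree-wise: in negative degrees the pullback collapses to $W(F)\eta^{-n}$; in degree zero it recovers the classical presentation $GW(F) \cong W(F)\times_{\F_2}\Z$ via class and rank; and in positive degrees the standard generation of $I^n$ by $n$-fold Pfister forms lets one lift any compatible pair $(w\eta^{-n},k)$ to $\kmw{n}$.

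The main obstacle is injectivity, which reduces to the statement $(\eta)\cap(h) = 0$ in $\kmw*$ combined with the identification of $\ker\psi$ as $(h)$. Both of these are consequences of Morel's explicit description of $\kmw{n}$ in each degree as a direct sum compatible with the two quotients; with such a graded basis in hand, any class vanishing in both $\km*$ and $L^*$ is visibly zero. Rather than reproduce Morel's basis argument here, I would cite \cite{Mo,GSZ} for the delicate degree-wise structure theorem; that black-box input is precisely what upgrades the commuting square to a pullback square on the nose.
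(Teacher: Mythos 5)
The paper does not actually prove this lemma; it is stated with attribution to Morel and Gille--Scully--Zhong and cited as a known presentation, so there is no in-paper argument to compare against. Your sketch is a reasonable outline of how such a proof goes: you define the comparison map $\psi$ exactly as in Lemma~\ref{lem:lstar} of the paper (note $\ip{a,-1}=\ip{a}-1$, so your $\pf{a}$ matches the paper's formula), verify the four defining relations land in $L^*$, use the universal property of the fibre product to get $\Phi$, and then observe that the real content --- injectivity, equivalently identifying $\ker\psi=(h)$ and $(\eta)\cap(h)=0$ --- is exactly Morel's degree-wise structure theorem for $\kmw{n}$, which you (appropriately) cite rather than reprove. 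That matches the role \cite{Mo} and \cite{GSZ} play here. Two small cautions. First, your surjectivity step in positive degree is slightly compressed: given a compatible pair $(w\eta^{-n},k)$ with $\overline{w}=m(\overline{k})$ in $I^n/I^{n+1}$, the cleaner argument is to lift $k$ to some $\tilde k\in\kmw{n}$ (possible since $\kmw*\twoheadrightarrow\km*$), note $\psi(\tilde k)-w\eta^{-n}\in I^{n+1}\eta^{-n}$, and then correct $\tilde k$ by an element of $(\eta)\cap\kmw{n}$ because $\psi\bigl((\eta)\cap\kmw{n}\bigr)=I^{n+1}\eta^{-n}$; the ``generation of $I^n$ by Pfister forms'' phrasing alone does not quite give you control of the $\km*$-coordinate. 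Second, be careful reconciling the map $q$ in the statement with the isomorphism $L^*/(\,\cdot\,)\cong Gr_I$: as an $\N$-graded object $Gr_I$ is the quotient of $L^*$ by the ideal generated by $\eta$ (in degree $n\geq 0$ one gets $I^n/I^{n+1}$ and in negative degrees $0$), which is what your commutativity check implicitly uses. Overall the proposal is sound modulo the cited black box, which is also where the paper places all the weight.
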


The ring $\kmw*$ is an $\epsilon$-commutative graded ring,\footnote{Recall that a graded ring $R_*$ is $\epsilon$-commutative when $ab=\epsilon^{m+n}ba$ when $a\in R_m$ and $b\in R_n$.} with $\epsilon=-(1+\eta[-1])$. We may then study its homogeneous spectrum, or the set of prime ideals generated by homogeneous elements:
\[\spec^h(\kmw*)=\{J\in \spec(\kmw*): J=(J\cap \kmw n :n\in \Z)\}.\]
We equip $\spec^h(\kmw*)$ with the Zariski topology generated by the subbasis elements
\[D(q):=\{J: q\not\in J\}\]
as $q$ ranges over $\kmw*$. The main theorem of this paper is a complete characterization of $\spec^h(\kmw*)$ in terms of the orderings\footnote{Recall that an ordering $\alpha$ on a field is uniquely determined by its positive cone $P_\alpha$, and may be viewed as a group epimorphism $\alpha:F^\times\rightarrow \{\pm 1\}$ satisfying additivity: $\alpha(a+b)=1$ if $a,b$ are positive, \emph{i.e.}, when $\alpha(a)=\alpha(b)=1$.} on $F$.

\begin{thm}\label{maintheorem} Let $h=(2+\eta[-1])$. As a set, the homogeneous spectrum is
\[\spec^h(\kmw*)=A\amalg B\amalg C\amalg D\amalg \{([F^\times], 2),\; ([F^\times],\eta),\; ([F^\times],2,\eta)\}\]
where 
\begin{align*}
A&=\{([P_\alpha], h): \alpha \mbox{ an ordering on }F\},\\
 B&=\{([P_\alpha], 2, \eta): \alpha\mbox{ an ordering} \},\\ 
C&=\{([P_\alpha], h, p): \alpha\mbox{ an ordering}, p\mbox{ an odd prime}\}, \mbox{ and}\\
D&=\{([F^{\times}], \eta, p): p\mbox{ an odd prime}\}.
\end{align*} Topologically, $A\cong B\cong X_F$, where $X_F$ is the space of orderings on $F$ with the Harrison topology, that is the topology induced by the product topology on $\{\pm 1\}^{F^\times}$.
\end{thm}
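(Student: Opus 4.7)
The crucial structural identity in $\kmw*$ is the Witt relation $h\eta=0$, which forces every homogeneous prime to contain at least one of $h$ or $\eta$. This partitions the spectrum as $\spec^h(\kmw*) = V(h) \cup V(\eta)$, with $V(\eta) = \spec^h(\kmw*/(\eta)) = \spec^h(\km*)$ and $V(h) = \spec^h(\kmw*/(h))$. My plan is to compute each piece separately and then reconcile them along the overlap.

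\emph{The Milnor piece.} I would first prove that every homogeneous prime of $\km*$ contains the augmentation $([F^\times]) = \bigoplus_{n>0}\km{n}$; this is the technical heart of the analysis. Once granted, a homogeneous prime is determined by a prime of $\km{0} = \Z$ together with possibly additional mod-$2$ data. The primes $(0) \subset \Z$ and $(p) \subset \Z$ for odd $p$ lift uniquely to $([F^\times],\eta)$ and to the family $D$, while $(2) \subset \Z$ refines via the Milnor conjecture isomorphism $\km*/(2) \cong Gr_I$: the positive-degree primes of $Gr_I$ are indexed by orderings through the signatures $\sgn_\alpha$, producing the family $B$ together with the maximal augmentation $([F^\times],2,\eta)$.

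\emph{The Witt piece.} In degree zero $h$ is the hyperbolic form $\langle 1,-1\rangle \in GW(F)$, so $\kmw{0}/(h) = W(F)$. By the classical Lorenz--Leicht theorem, $\spec(W(F))$ consists of the fundamental ideal $I$, the signature kernels $\ker(\sgn_\alpha)$ for $\alpha \in X_F$, and their odd-prime refinements $\ker(\sgn_\alpha) + p\,W(F)$. I would lift each degree-zero prime uniquely to a homogeneous prime of $\kmw*/(h)$, yielding the extra prime $([F^\times],2) = ([F^\times],h)$ from $I$, the family $A$ from the signature kernels, and the family $C$ from the mod-$p$ refinements. Uniqueness of the lift leverages the pullback presentation (Lemma \ref{lem:fibreproduct}) to identify $\kmw*/(h)$ with a graded variant of the extended Rees algebra $L^*$ whose positive-degree pieces are controlled by powers of $I$, making $\eta$ a non-zero-divisor and forcing the lift to be determined by its degree-zero contraction.

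\emph{Gluing and topology.} The overlap $V(h) \cap V(\eta)$ consists of primes containing both $h$ and $\eta$, which from the two enumerations is precisely $B \cup \{([F^\times],2,\eta)\}$; after subtracting this overlap one obtains the claimed disjoint union. For the topological statement, I would verify that the bijection $A \leftrightarrow X_F$ sending $([P_\alpha],h) \mapsto \alpha$ identifies the Zariski subbasis $\{D([a]) \cap A\}_{a \in F^\times}$ with the Harrison subbasis $\{\{\alpha : \alpha(a) = -1\}\}_{a \in F^\times}$ --- since $[a]$ survives in the quotient by $([P_\alpha],h)$ exactly when $a \notin P_\alpha$ --- and similarly for $B$. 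I expect the main obstacle is the initial Milnor-side step of showing every homogeneous prime of $\km*$ contains $([F^\times])$: since symbols in $\km*$ are generally not nilpotent, this is not purely formal, and demands either a careful zero-divisor argument exploiting the Steinberg and twisted logarithm relations residue-by-residue, or substantive input from the Milnor conjecture and norm residue isomorphism.
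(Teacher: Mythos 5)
Your high-level decomposition $\spec^h(\kmw*) = V(h)\cup V(\eta)$ via the Witt relation $h\eta=0$ is exactly the paper's, and you correctly identify the two quotients as $\km*$ and $L^*\cong \kmw*/(h)$. However, the two ``uniqueness'' claims that carry the weight of your plan are both false, and they fail in precisely the same place: the family $B=\{([P_\alpha],2,\eta)\}$ of characteristic-$2$ primes indexed by orderings.

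On the Milnor side, the assertion that every homogeneous prime of $\km*$ contains $([F^\times])=\bigoplus_{n>0}\km n$ is wrong. The paper only proves this for primes of characteristic $\neq 2$. In characteristic $2$, the prime $([P_\alpha],2)\subset \km*$ has quotient $\km*/([P_\alpha],2)\cong\F_2[[-1]]$, so $[-1]\notin([P_\alpha],2)$ and the augmentation is not contained. Your own next sentence (``together with possibly additional mod-$2$ data'' and ``positive-degree primes of $Gr_I$ are indexed by orderings'') tacitly acknowledges this, but it directly contradicts the literal claim: if every prime contained the full augmentation, the only positive-degree prime of $\km*/(2)\cong Gr_I$ would be the maximal one, and there would be no indexing by orderings at all. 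The entire family $B$ is exactly the obstruction to your ``technical heart'' step, and the paper has to prove a genuinely different statement --- that every homogeneous prime of $\km*/(2)$ is either $(\km1)$ or $([P_\alpha])$ --- via an explicit ordering construction. On the Witt side, the claim that each degree-zero prime of $W(F)$ lifts \emph{uniquely} to $\spec^h(\kmw*/(h))\cong\spec^h(L^*)$ is also false: the primes $(L^1)$, $(L^1,\eta)$, and $(J_\alpha^+,2,\eta)$ for every $\alpha\in X_F$ all contract to the fundamental ideal $I\subset W(F)$, so the contraction is far from injective (and $\eta$ being a non-zero-divisor does not repair this). This is why the paper computes $\spec^h(R)$ for real closed fields and then proves every homogeneous prime of $L^*$ contains some $J_\alpha^+$, rather than lifting degree-zero primes. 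Symptomatically, your enumeration of $V(h)$ produces only $([F^\times],2)$, $A$, and $C$ --- omitting $B$ and $([F^\times],\eta)$ --- yet your gluing step asserts $B\subset V(h)\cap V(\eta)$, an inconsistency that reflects the broken lifting argument. Finally, the topological claim that $\{D([a])\cap A\}$ is a subbasis also needs work: the Zariski subbasis involves $D(q)$ for all homogeneous $q$, including sums of products of symbols, and reducing to the $D([a])$ requires the total-signature continuity argument the paper actually gives.
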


\section{Homogeneous Ideals of $\kmw*$}\label{sec:spec}

We determine the homogeneous spectrum of $\kmw*$ (as a topological space) in this section. From Morel's pullback presentation, $\kmw0\cong W(F)\times_{Gr_I} \Z\cong GW(F)$, where $GW(F)$ is the Grothendieck-Witt ring of $F$. The image of the hyperbolic plane under this isomorphism is $h:=(2+\eta[-1])$. The Witt relation then says $h\eta=0$. This suggests that computing $\spec^h(\kmw*)$ amounts to computing the ideals in $\kmw*/(h)$ and $\kmw*/(\eta)$. Indeed, this is literally the case if our interest is only in the spectra qua sets.\footnote{Secretly, each of these quotients is one of the factors in the pullback presentation of \textsection 1. The spectrum of $\kmw*$ is then the pushout of the spectra of the quotients. We will do a little extra computation to avoid proving the general result about $\epsilon$-commutative graded rings. See \cite{schwede} for the commutative non-graded case.} We will start with these computations and then stitch the results together.

\subsection{Determination of $\spec^h(\kmw*/(h))$} \label{subsec:kmw/h}

We will need a more concrete presentation of $\kmw*/(h)$. A lemma of Morel supplies one in terms of the ring $L^*$ (described in \textsection 1).

\begin{lem}[Morel, \cite{Mo}]\label{lem:lstar} The quotient $\kmw*/(h)$ is isomorphic to $ L^*$ via the the map $[a]\mapsto \ip{a,-1}\eta\inv.$ 
\end{lem}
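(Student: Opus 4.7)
Plan: The isomorphism will be obtained by defining a graded ring map $\phi\colon \kmw*\to L^*$ on generators, showing it descends to $\kmw*/(h)$, and then checking it is a bijection.

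First I would define $\phi$ by $\eta\mapsto\eta$ and $[a]\mapsto \ip{a,-1}\eta\inv$ and verify that each of the four defining relations of Milnor--Witt $K$-theory holds in the image. The twisted-logarithm relation amounts to the $W(F)$-identity $\ip{a,-1} + \ip{b,-1} + \ip{a,-1}\ip{b,-1} = \ip{ab,-1}$, which comes out by expanding $\ip{a,-1} = \ip{a}-1$ and simplifying. The Steinberg relation reduces to the vanishing of the Pfister form $\pf{a,1-a}$ in $W(F)$, a classical fact. $\eta$-commutativity is automatic since $L^*$ is commutative. For the Witt relation the key calculation $\eta[-1]\mapsto \ip{-1,-1} = -2$ in $W(F)$ shows $h\mapsto 0$, which simultaneously handles $h\eta\mapsto 0$ and ensures $\phi$ descends to a map $\bar\phi\colon\kmw*/(h)\to L^*$.

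Next I would check surjectivity. The ring $L^*$ is generated over $\Z$ by $\eta\in L^{-1}$ together with the elements $\ip{a,-1}\eta\inv\in L^1$: the products $\eta\cdot\ip{a,-1}\eta\inv = \ip{a,-1}$ generate $I\subseteq L^0 = W(F)$ and hence with $1$ all of $W(F)$, while products of $n$ of the degree-one generators exhaust $I^n\eta^{-n} = L^n$ because $I^n$ is generated as an abelian group by $n$-fold Pfister forms.

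For injectivity the most efficient route is to invoke Morel's pullback presentation (Lemma~\ref{lem:fibreproduct}). Under $\kmw*\cong L^*\times_{Gr_I}\km*$ the map $\bar\phi$ is identified with the first projection, whose kernel consists of pairs $(0,k)$ where $k$ lies in the kernel of the structure map $\km*\to Gr_I$, namely $2\km*$. Since $h$ corresponds to the hyperbolic element $(0,2)\in L^0\times\km0$, and since $q\colon L^*\to Gr_I$ is surjective, every $k\in\km*$ admits some $l\in L^*$ making $(l,k)$ land in the pullback; thus the principal ideal $(h)$ exhausts the kernel, and $\bar\phi$ is injective. The main obstacle is precisely this injectivity step: without the pullback presentation one would have to construct an explicit inverse $\psi\colon L^*\to\kmw*/(h)$ on the generators $\eta$ and $\ip{a,-1}\eta\inv$ and check compatibility with every Witt-ring relation and with the multiplicative structure of the $I$-adic pieces, which is tedious. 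Leaning on Lemma~\ref{lem:fibreproduct} front-loads the difficulty into an already-cited result and keeps the argument short.
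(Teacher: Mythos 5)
The paper gives no proof of this lemma; it is a citation to Morel \cite{Mo} (with a corrected treatment in Gille--Scully--Zhong \cite{GSZ}). Your verifications that the map $\phi$ is well-defined on $\kmw*$ and sends $h$ to zero are all correct: the twisted logarithm comes down to $\ip{a,-1}=\ip{a}-1$ and multiplicativity of $\ip{-}$, Steinberg to the hyperbolicity of the two-fold Pfister form $\ip{1,-a}\ip{1,-b}$ when $a+b=1$, and the Witt relation to $\ip{-1,-1}=-2$ in $W(F)$. The surjectivity argument is also sound, since $W(F)=\Z\cdot 1+I$ and $I^n$ is additively spanned by $n$-fold Pfister forms, hence by products of the $\ip{a,-1}$.

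The injectivity step contains a gap. You write that under Lemma~\ref{lem:fibreproduct} ``the map $\bar\phi$ is identified with the first projection,'' but Lemma~\ref{lem:fibreproduct} as stated only asserts the \emph{existence} of an isomorphism $\kmw*\cong L^*\times_{Gr_I}\km*$; it does not say the resulting composite $\kmw*\to L^*$ coincides with your $\phi$, nor that $h$ goes to $(0,2)$. Without that identification, computing $\ker\pi_1$ tells you nothing about $\ker\phi$ --- a priori one could have $\ker\phi\supsetneq(h)$, and a surjective endomorphism of the non-Noetherian ring $L^*$ need not be injective, so you cannot sidestep this by cardinality. The identification is in fact how Morel and Gille--Scully--Zhong set up the pullback, so the gap is fillable by citing that, but it must be said. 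A further caution: in those sources the pullback presentation is \emph{deduced from} the isomorphism $\kmw*/(h)\cong L^*$, so invoking Lemma~\ref{lem:fibreproduct} to prove this lemma is circular once you trace the references. A self-contained injectivity argument (for instance, constructing the inverse $L^*\to\kmw*/(h)$ directly on Pfister-form generators and checking the Witt ring relations) would avoid both issues.
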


As with the Witt ring (see \cite{LL}), the spectrum of $L^*$ is deducible from the structure of the space, $X_F$, of orderings on $F$ and the following computation of the spectrum for real closed fields.

\begin{thm}
If $F$ is real closed, $\spec^h(L^*)=\{(p):p\mbox{ an odd prime or }0\}\amalg\{(\eta,2),(2,L^1),(\eta,2,L^1)\}$.
\end{thm}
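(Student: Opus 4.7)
The plan is to exploit the real closed hypothesis to give $L^*$ a very explicit presentation, and then classify homogeneous primes by whether they contain $\eta$. Under the signature isomorphism $W(F) \cong \Z$, the fundamental ideal $I$ corresponds to $2\Z$, so $I^n = 2^n\Z$ for every $n \geq 0$. Setting $\pi := 2\eta^{-1}$ as a generator of $L^1 = I\eta^{-1}$, the multiplication in $W(F)[\eta, \eta^{-1}]$ immediately gives the relation $\eta\pi = 2$, and I would verify that
\[L^* \cong \Z[\eta, \pi]/(\eta\pi - 2)\]
as graded rings with $\deg\eta = -1$ and $\deg\pi = 1$. This follows because $L^*$ is precisely the $\Z$-subalgebra of $\Z[\eta,\eta^{-1}]$ generated by $\eta$ and $2\eta^{-1}$.

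With this presentation in hand, I would bifurcate on whether $\eta \in J$. If $\eta \in J$, then $2 = \eta\pi \in J$ as well, and the quotient $L^*/(\eta, 2) \cong \F_2[\pi]$ is a graded polynomial ring in a single positive-degree variable, whose only homogeneous primes are $(0)$ and $(\pi)$. These lift to the primes $(\eta, 2)$ and $(\eta, 2, L^1)$ of $L^*$.

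For the complementary case I would localize, observing that $L^*[\eta^{-1}] = \Z[\eta, \eta^{-1}]$ since $\pi = 2\eta^{-1}$ becomes redundant once $\eta$ is inverted. A short direct argument shows the homogeneous primes of $\Z[\eta, \eta^{-1}]$ are the extensions of the ordinary primes of $\Z$, namely $(0)$ and $(p)$ for $p$ a prime integer. Contracting back to $L^*$ produces $(0)$ and $(p)$ for $p$ odd; in the exceptional case $p = 2$, the fact that $\eta \notin J$ together with $\eta\pi = 2 \in J$ forces $\pi \in J$, so the contraction is $(2, L^1)$.

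Assembling the two cases yields exactly the list in the statement, and each ideal is readily checked to be distinct and prime via a direct quotient computation (for instance, $L^*/(2, L^1) \cong \F_2[\eta]$ and $L^*/(p) \hookrightarrow \F_p[\eta, \eta^{-1}]$ for odd $p$). I do not anticipate a serious obstacle; the one step that merits care is the $p=2$ contraction from the Laurent ring, where one must note that $(2)$ itself is not prime in $L^*$ (the quotient $L^*/(2) \cong \F_2[\eta, \pi]/(\eta\pi)$ has zero divisors), so the saturation $(2, L^1)$ is the correct contraction.
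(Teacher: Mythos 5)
Your proof is correct, but it organizes the case analysis differently from the paper's. The paper cases on the characteristic of the residue ring of $J$ (characteristic $0$, odd, or $2$), giving three separate elementary arguments. You instead bifurcate on whether $\eta \in J$: when $\eta \in J$ you reduce to $\F_2[\pi]$ exactly as the paper's characteristic-$2$ subcase does (minus the symmetric $(2,y)$ branch), and when $\eta \notin J$ you localize to $L^*[\eta^{-1}] = \Z[\eta,\eta^{-1}]$ and classify homogeneous primes there via the degree-zero subring $\Z$. The localization step is the genuine difference: it unifies the paper's characteristic-$0$ and odd-characteristic cases into a single computation, at the modest cost of then having to contract carefully back along $L^* \hookrightarrow L^*[\eta^{-1}]$, in particular observing that the contraction of $(2)$ picks up $\pi$ by primality, yielding $(2, L^1)$ rather than $(2)$. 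Your final remarks that $(2)$ itself is not prime and that $L^*/(2,L^1)\cong\F_2[\eta]$, $L^*/(p)\cong\F_p[\eta,\eta^{-1}]$ are exactly the right sanity checks and mirror the paper's domain/invertibility observations. Both approaches are fully rigorous; the paper's is slightly more self-contained (no localization machinery), while yours is slightly more conceptual and generalizes more readily to other extended Rees algebras.
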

\begin{proof}
In this case, $W(F)\cong \Z$, where the isomorphism carries $I$ to $2\Z$. So, letting $y$ be a generator for $L^1$, $L^*=\Z[\eta,y]/(\eta y-2)$. Let $J\in \spec^h(L^*)$, and let $p=\chr(J)$. (Recall that the characteristic of a prime ideal is the characteristic of its quotient ring.)

If $p=0$, then we want to show that $J$ contains no elements of degree 0, except $0$. If any element of nonzero degree were in $J$, say $ay^n$ or $a\eta^n$, then we would have $a2^n\in J$ in degree $0$. So, $J=(0)$.

If $p$ is an odd prime, then $J$ descends to a homogeneous prime ideal in the integral domain $L^*/(p)$. Every homogeneous element of $L^*/(p)$ is invertible, meaning its only homogeneous prime ideal is $(0)$. So $J=(p)$.

If $p$ is $2$, then $J$ contains either $\eta$ or $y$ or both. Clearly, $L^*/(2,\eta)\cong L^*/(2,y)\cong (\Z/2\Z)[x]$, where $x$ has degree $-1$ or $1$, respectively. The only homogeneous prime ideals here are $(x)$ and $(0)$, which pull back to $(2,\eta,y)$ and one of $(2,\eta)$ or $(2,y)$, respectively.
\end{proof}

Since the ring will show up again, we will write
\[R:=\Z[\eta,y]/(\eta y-2).\]

We will also make use of the following definitions.
\begin{dfn}
For any fields $F,K$, and any ring morphism $\phi: W(F)\rightarrow W(K)$, the canonical homogeneous extension $\phi^+:L^*F\rightarrow L^*K$ is given by:
\[\phi^+(q\eta^n)=\phi(q)\eta^n\]

We will want some notation for the the specific case of the signature maps.\footnote{Recall that the signature of a form $q=\ip{a_1,...,a_n}$ at the order $\alpha$ is $\sgn_\alpha(q)=\sum_{1\leq i\leq n} \alpha(a_i)$.} For $\alpha\in X_F$, let $F_\alpha$ be the real closure of $F$ at $\alpha$ and consider $\sgn_\alpha:W(F)\rightarrow W(F_\alpha)\cong \Z$ for $\alpha\in X_F$. We will write $J_\alpha^+$ for $\ker(\sgn_\alpha^+)$.
\end{dfn}

The next two theorems completely list the elements of $\spec^h(L^*)$.

\begin{thm}\label{orderidealsinl}
The following are homogeneous prime ideals in $L^*$:
\[J^+_\alpha,\;(J^+_\alpha, p),\; (J^+_\alpha,2,\eta),\;(L^1),\;(L^1,\eta)\mbox{ for }\alpha\in X_F\mbox{ and } \;p\mbox{ an odd prime}.\]
Furthermore, any homogeneous ideal containing some $J_\alpha^+$ is one of these, and if $\alpha\not=\beta$, $J^+_\alpha\not=J_\beta^+,\;(J^+_\alpha, p)\not=(J^+_\beta),\;$and $(J^+_\alpha,2,\eta)\not=(J^+_\beta,2,\eta)$.
\end{thm}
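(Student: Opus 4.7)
The plan is to reduce to the real closed case via the signature map. First, I would show that $\sgn_\alpha^+ : L^*F \to L^*F_\alpha \cong R$ is a surjection of graded rings, so that $L^*F / J_\alpha^+ \cong R$. Degree-by-degree this amounts to the surjectivity of $\sgn_\alpha : I^n \to 2^n \Z$, which holds because $I^n$ is additively generated by $n$-fold Pfister forms and $\sgn_\alpha(\pf{-1,\dots,-1}) = 2^n$. Once this is in place, the standard correspondence identifies the homogeneous primes of $L^*F$ containing $J_\alpha^+$ with the homogeneous primes of $R$, which are enumerated by the preceding theorem as $(0)$, $(p)$ for $p$ an odd prime, $(\eta, 2)$, $(2, L^1)$, and $(\eta, 2, L^1)$.

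Second, I would carry out the pullback. The primes $(0), (p), (\eta, 2) \subseteq R$ pull back to $J_\alpha^+$, $(J_\alpha^+, p)$, $(J_\alpha^+, \eta, 2)$ respectively. For the two remaining primes of $R$, the relation $\eta y = 2$ collapses $(2, L^1)_R = (y)$ and $(\eta, 2, L^1)_R = (\eta, y)$, so their pullbacks are $(J_\alpha^+, L^1)$ and $(J_\alpha^+, L^1, \eta)$. To match the statement of the theorem, it suffices to show $J_\alpha^+ \subseteq (L^1)$, which I would verify degree-by-degree using the identities $(L^1) \cap L^n = L^n$ for $n \geq 1$ and $(L^1) \cap L^n = I\eta^{-n}$ for $n \leq 0$, combined with the observation that $\ker(\sgn_\alpha) \subseteq I$ (since signature and dimension agree mod $2$ on $W(F)$).

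Finally, for the distinctness claim with $\alpha \neq \beta$, I pick $a \in P_\alpha \setminus P_\beta$, so $\pf{a} = \ip{1, -a} \in I$ has $\sgn_\alpha(\pf{a}) = 0$ but $\sgn_\beta(\pf{a}) = 2$. This separates $J_\alpha^+$ from $J_\beta^+$, and also $(J_\alpha^+, p)$ from $(J_\beta^+, p)$ for odd $p$ (since $p \nmid 2$ in $R$). For $(J_\alpha^+, 2, \eta)$ versus $(J_\beta^+, 2, \eta)$, the element $\pf{a}$ vanishes mod $(J_\beta^+, 2)$ and is useless; I instead use $\pf{a}\eta^{-1} \in L^1 \cap J_\alpha^+$, whose image in $R/(\eta, 2) \cong \F_2[y]$ under $\sgn_\beta^+$ is the nonzero generator $y$. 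The main obstacle I anticipate is the inclusion $J_\alpha^+ \subseteq (L^1)$: the description of $(L^1)$ in negative degrees requires careful bookkeeping, while the rest of the argument is a routine application of the correspondence theorem.
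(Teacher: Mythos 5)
Your proof is correct and follows essentially the same route as the paper's: establish $L^*/J_\alpha^+\cong R$, pull the spectrum of $R$ back along the quotient via the correspondence theorem, show $J_\alpha^+\subseteq(L^1)$ so the pullbacks of $(2,L^1)$ and $(\eta,2,L^1)$ simplify to $(L^1)$ and $(L^1,\eta)$, and separate the ideals for $\alpha\neq\beta$ with a signature computation. You fill in a few details the paper leaves implicit --- the surjectivity of $\sgn_\alpha^+$ and the degree-by-degree check of $J_\alpha^+\subseteq(L^1)$ in negative degrees --- and for distinguishing the characteristic-$2$ primes you rightly pass from a degree-zero to a degree-one witness, which is the element the paper in fact uses uniformly for all three cases.
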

\begin{proof}
We know that $L^*/J^+_\alpha\cong R$, and $R$ has homogeneous spectrum 
\[\{(p):p\mbox{ an odd prime or }0\}\amalg\{(\eta,2),(2,L^1),(\eta,2,L^1)\}.\] The preimage of a spectrum under a quotient map can be computed in the $\epsilon$-graded commutative in the same way as in the commutative case (cf. \cite[Vol. 2. Ch VII, \textsection 2, Lemma 1]{ZS} for the graded case). Clearly, $(0),$ $(p)$, $(\eta, 2)$ pull back to $J^+_\alpha$, $(J_\alpha^+, p)$, and $(J_\alpha^+, 2, \eta)$ respectively. Also, since $\alpha(a)=\pm 1$, $\sum_{i=1}^n \alpha(a_i)=0$ implies $n$ must be even. so $J_\alpha^+\subseteq (I)=(L^1)$. Thus $(2, L^1)$ and $(2,\eta, L^1)$ pull back to $(L^1)$ and $(L^1,\eta)$. So,  $\spec^h(R)$ pulls back to the ideals listed. 

If $\beta\not=\alpha$, then there is some $a$ such that $\sgn_\alpha^+(\ip{1,a})=2\eta\inv$ and $\sgn_\beta^+(\ip{1,a})=0$ (namely, any element of $F^{\times}$ on which $\alpha$ and $\beta$ disagree), and $2\eta\inv$ is not in any of $J^+_\alpha,\;(J^+_\alpha, p),\;$ or $ (J^+_\alpha,2,\eta)$ as otherwise each of these would contain every element of positive degree in $L^*$.
\end{proof}

The next theorem implies there are no other ideals than the ones listed in Theorem \ref{orderidealsinl}. There is a much simpler proof for the non-two characteristic part of the spectrum, but we will present only the most general proof below.

\begin{thm}
Every element of $\spec^h(L^*)$ contains some $J^+_\alpha$.
\end{thm}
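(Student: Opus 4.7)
Let $J\in\spec^h(L^*)$. The plan is to first use the classical Lorenz--Leicht theorem for the Witt ring to constrain $J_0:=J\cap L^0\in\spec(W(F))$, and then propagate that control through the rest of the grading by a case analysis on whether $\eta\in J$. By Lorenz--Leicht, there is an ordering $\alpha\in X_F$ with $J_\alpha:=\ker(\sgn_\alpha)\subseteq J_0$ (in particular this holds for \emph{every} $\alpha$ when $J_0=I$, since $J_\alpha\subseteq I$ always). Consequently $J_\alpha\cdot L^*\subseteq J$, and the task is to strengthen this to $J_\beta^+\subseteq J$ for some (possibly different) ordering $\beta$.

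If $\eta\notin J$, the desired inclusion $J_\alpha^+\subseteq J$ for the Lorenz--Leicht $\alpha$ follows from a short localization/primality argument. A homogeneous element $q\eta^{-n}\in J_\alpha^+$ has $q\in J_\alpha\cap I^n$: when $n\leq 0$, $q\eta^{-n}=q\cdot\eta^{|n|}$ is an $L^*$-multiple of $q\in J$; when $n>0$, the identity $(q\eta^{-n})\cdot\eta^n=q\in J$ combined with $\eta^n\notin J$ (by primality) forces $q\eta^{-n}\in J$. Equivalently, one observes $L^*[\eta^{-1}]=W(F)[\eta,\eta^{-1}]$, whose homogeneous primes correspond bijectively to $\spec(W(F))$, and $J_\alpha^+$ is the contraction to $L^*$ of the extension of $J_\alpha$ to this Laurent ring.

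If $\eta\in J$, then the Witt relation $h=2+\eta[-1]=0$ in $L^*=\kmw*/(h)$ yields $2=-\eta[-1]\in J$, whence $J_0\supseteq(J_\alpha,2)=I$ and hence $J_0=I$. Thus $J\supseteq(\eta)$, and $\bar J:=J/(\eta)$ is a homogeneous prime of $L^*/(\eta)\cong\operatorname{Gr}_I W(F)$ (the negative-degree parts of $L^*$ vanish because $L^{-n}=\eta\cdot L^{-(n-1)}$ for $n\geq 1$). I would then reduce the problem to the assertion that every homogeneous prime of $\operatorname{Gr}_I W(F)$ either equals the irrelevant ideal $\bigoplus_{n\geq 1}I^n/I^{n+1}$, or contains the kernel of the induced signature $\operatorname{Gr}_I\sgn_\beta\colon\operatorname{Gr}_I W(F)\to\F_2[\sigma]$ for some $\beta\in X_F$. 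In the first subcase $J=(L^1,\eta)$, and a direct check on homogeneous components confirms $(L^1)\supseteq J_\gamma^+$ for every $\gamma$, so $J\supseteq J_\gamma^+$. In the second subcase, lifting $\ker(\operatorname{Gr}_I\sgn_\beta)$ back to $L^*$ produces an ideal containing $J_\beta^+$, so $J\supseteq J_\beta^+$.

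The main obstacle is the dichotomy for primes of $\operatorname{Gr}_I W(F)$ asserted in the previous paragraph---an analog of Lorenz--Leicht in the graded mod-$2$ setting. Via the Milnor isomorphism $\operatorname{Gr}_I W(F)\cong\km*(F)/2$ (valid for $\operatorname{char} F\neq 2$ by Milnor--Voevodsky), this becomes a classification question for homogeneous primes of Milnor $K$-theory mod $2$: each $\beta\in X_F$ contributes a graded ring map $\km*(F)/2\to\F_2[\sigma]$ sending $\{a_1,\ldots,a_n\}$ to $\sigma^n$ when all $a_i$ are $\beta$-negative and to $0$ otherwise, and we need that any proper homogeneous prime is either caught by one of these signatures or else contains all of $\bigoplus_{n\geq 1}\km n(F)/2$. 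I would approach this combinatorially: given a prime $\bar J$, assemble $S:=\{a\in F^\times:[a]\in\bar J\}$, use the Steinberg relation together with primality to show $S\cup\{0\}$ satisfies the appropriate ordering-theoretic closure properties, and invoke Artin--Schreier to extract the witnessing $\beta$ (with $S=F^\times$ yielding the irrelevant-ideal alternative).
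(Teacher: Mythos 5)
Your argument is correct in outline but takes a genuinely different route from the paper's, and it leaves its hardest step as a sketch. The paper proves the theorem by a single, self-contained primality argument inside $L^*$: split on whether $L^1\subseteq J$; when not, use the Witt-ring identity $\ip{1,a}\ip{1,-a}=0$ to force, for each $a$, that exactly one of $\ip{1,\pm a}\eta\inv$ lies in $J$; read off a sign function $\alpha$, and then verify directly from the relations in $W(F)$ that $\alpha$ is multiplicative and additive and that $J^+_\alpha\subseteq J$. No classification of $\spec(W(F))$ or of $\spec^h(\km*/2)$ is invoked. You instead feed in Lorenz--Leicht at degree zero, split on $\eta\in J$, handle $\eta\notin J$ by a clean localization observation ($L^*[\eta\inv]=W(F)[\eta,\eta\inv]$), and when $\eta\in J$ pass to $L^*/(\eta)\cong\operatorname{Gr}_I W(F)\cong\km*/2$ and reduce to the statement that every homogeneous prime there is either the irrelevant ideal or contains $\ker(\operatorname{Gr}_I\sgn_\beta)$. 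That reduction is sound, and the paper in fact proves exactly that $\km*/2$ dichotomy (independently of the present theorem) in the following subsection, so there is no circularity; but in your write-up the dichotomy is announced, with a plausible Artin--Schreier-flavored strategy, rather than proved. The ordering-extraction work you defer to that lemma is precisely the combinatorial core that the paper's direct argument does in situ. On the comparison: the paper's route is shorter and needs no outside input; yours is more modular, makes the ties to the classical Witt-ring picture and to $\km*/2$ explicit, and would let the $L^*$ classification be deduced formally once the degree-zero and mod-$(\eta)$ classifications are in hand --- but as written the burden has been shifted onto the $\km*/2$ step, which still needs to be carried out.
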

\begin{proof}
Suppose $J\in \spec^h(L^*)$. If $L^1\subseteq J$, then for any $\alpha\in X_F$, again since $J^+_\alpha\subseteq (L^1)$, $J^+_\alpha\subseteq J$. So, we may assume $L^1\not\subseteq J$.

For every $a\in F^\times$, in $L^*$,
\[(\ip{1,a}\eta\inv)(\ip{1,-a}\eta\inv)=0\]
so
\[\ip{1,a}\eta\inv\in J,\mbox{ or }\ip{1,-a}\eta\inv\in J\]
Thus, $\ip{a,b}\eta\inv\equiv \ip{\epsilon_0,\epsilon_1}\eta\inv$ where $\epsilon_i\in\{\pm1\}$. So, $\ip{a,b}\eta\inv=\pm 2 \eta\inv$ or $0$ modulo $J$.

Note that, since $L^1\not\subseteq J$ and $L^1$ is generated by elements of the form $\ip{a,b}\eta\inv$, $2\eta\inv\not\in J$. It follows that $\ip{-a,\alpha(a)}\eta\inv\in J$ characterizes a unique function $\alpha\in\{\pm1\}^{F^\times}$. We claim that $\alpha$ is an order and $J^+_\alpha\subseteq J$.

For the first claim, clearly $\alpha(1)=1$. Let $a,b$ be such that $\alpha(a)=\alpha(b)=1$. It suffices to show that $\alpha(-a)=-1$ and $\alpha(ab)=\alpha(a+b)=1$. First note:
\[\ip{-1,-(-a)}\eta\inv=-\ip{1,-a}\in J\]
so $\alpha(-a)=-1$. Consider the product:
\[0\equiv\ip{1,-a}\ip{1,-b}\eta\inv\equiv\ip{1,-a,-b,ab}\eta\inv\equiv\ip{-b,ab}\eta\inv\equiv\ip{-1,ab}\eta\inv \;\mbox{( mod $J$)}\]
so $\alpha(ab)=1$. We then have, working modulo $J$ and using the classical identity $\ip{a,b}\cong \ip{a+b,ab(a+b)}$:
\begin{align*} (2\eta\inv)(\ip{1,a+b}\eta\inv)& \equiv (\ip{1,1}\eta\inv)(\ip{1,a+b}\eta\inv)&\;\\
				\;&\equiv(\ip{1,ab}\eta\inv)(\ip{1,a+b}\eta\inv)&(\mbox{since }\alpha(ab)=1)\\
                                           \;&\equiv\ip{1,ab,(a+b), (a+b)ab}\eta^{-2}&\;\\
                                           \;&\equiv\ip{1,ab,a,b}\eta^{-2}&(\mbox{by the identity})\\
                                           \;&\equiv (2\eta\inv)(2\eta\inv).
\end{align*}

Since $2\eta\inv\not\in J$, we get $\ip{1,a+b}\eta\inv\not\in J$. Thus $\alpha(a+b)=1$.

For the second claim, note that for all $a_i\in F^\times$,
 \[\sgn_\alpha^+\left(\left(\sum \ip{a_i}\right)\eta^n \right)=\left(\sum\ip{\alpha(a_i)}\right)\eta^n\equiv \left(\sum\ip{a_i}\right)\eta^n\mbox{( mod $J$)}.\]
\end{proof}

It is straightforward to move the homogeneous spectrum of $L^*$ through the isomorphism of Lemma \ref{lem:lstar} ($[a]\mapsto\ip{a,-1}\eta\inv$) to get the spectrum of $\kmw*/(h)$.

\begin{cor}
If $J\in\spec^h(\kmw*/(h))$, $J$ is exactly one of the following:
\begin{itemize}
\item $([P_\alpha])$ for some $\alpha\in X_F$,
\item $([P_\alpha], p)$ for some $\alpha\in X_F$, odd prime $p$,
\item $([P_\alpha], 2,\eta)$ for some $\alpha\in X_F$,
\item $([F^\times])$, or
\item $([F^\times],\eta)$.
\end{itemize} 
\end{cor}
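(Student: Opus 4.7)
The plan is to leverage the graded ring isomorphism $\phi\colon\kmw*/(h)\to L^*$ from Lemma~\ref{lem:lstar}, sending $[a]\mapsto\ip{a,-1}\eta\inv$, to transport the classification of $\spec^h(L^*)$ obtained above. Pullback along $\phi$ induces a bijection $\spec^h(L^*)\to\spec^h(\kmw*/(h))$, so it suffices to identify the preimage under $\phi$ of each of the five families of primes enumerated by Theorem~\ref{orderidealsinl}.

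The substantive step is establishing $\phi\inv(J^+_\alpha)=([P_\alpha])$. For the containment $([P_\alpha])\subseteq\phi\inv(J^+_\alpha)$, I would observe that $\sgn_\alpha(\ip{a,-1})=\alpha(a)+\alpha(-1)=\alpha(a)-1$ vanishes exactly when $a\in P_\alpha$, so each generator $[a]$ with $a\in P_\alpha$ maps into $J^+_\alpha$. For the reverse, I would compute the quotient $\kmw*/(h,[P_\alpha])$ directly and compare it with $L^*/J^+_\alpha\cong R$. In that quotient, $h=0$ gives $\eta[-1]=-2$; for any $a\notin P_\alpha$ we have $-a\in P_\alpha$, whence $[-a]=0$, and the twisted-logarithm identity $[-a]=[-1]+[a]+\eta[-1][a]$ together with $\eta[-1]=-2$ forces $[a]=[-1]$. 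The Steinberg relations $[a][1-a]=0$ are automatic because in any ordering of a field, $a$ and $1-a$ cannot both be negative, so at least one factor already vanishes. Thus the quotient is generated over $\Z$ by $\eta$ and $z:=[-1]$ subject only to $\eta z+2=0$, which is precisely $R$. The descent of $\phi$ then gives a surjection $R\cong\kmw*/(h,[P_\alpha])\twoheadrightarrow L^*/J^+_\alpha\cong R$ of Noetherian rings, which must be an isomorphism, so $\phi(([P_\alpha]))=J^+_\alpha$ and the desired equality follows.

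With this in hand, the remaining preimages are essentially formal. Since $\phi$ fixes the integers and $\eta$, $\phi\inv((J^+_\alpha,p))=([P_\alpha],p)$ for every odd prime $p$ and $\phi\inv((J^+_\alpha,2,\eta))=([P_\alpha],2,\eta)$. For the two order-independent primes, each $\phi([a])=\ip{a,-1}\eta\inv$ lies in $L^1=I\eta\inv$ because the binary form $\ip{a,-1}$ has even dimension and hence lies in $I$, so $([F^\times])\subseteq\phi\inv((L^1))$; equality follows from a parallel quotient computation $\kmw*/(h,[F^\times])\cong\F_2[\eta]\cong L^*/(L^1)$ (killing every $[a]$ turns $h=2+\eta[-1]$ into $2$), and then $\phi\inv((L^1,\eta))=([F^\times],\eta)$ drops out.

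I expect the main obstacle to be the quotient-ring computation $\kmw*/(h,[P_\alpha])\cong R$: one must verify carefully that the Milnor-Witt relations collapse exactly to $\eta[-1]+2=0$ and do not secretly force any further identification on $[-1]$ (for instance with $0$ or with a power of $\eta$), since otherwise the comparison with $R$ would fail and the bijection could not be read off directly.
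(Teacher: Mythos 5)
Your proposal is correct and takes essentially the same approach as the paper: transport $\spec^h(L^*)$ through the graded isomorphism of Lemma~\ref{lem:lstar}. The paper dispatches this as ``straightforward'' without proof, whereas you supply the real content---verifying $\phi\inv(J^+_\alpha)=([P_\alpha])$ via the quotient computation $\kmw*/(h,[P_\alpha])\cong R$, neatly closed off by the surjective-endomorphism-of-a-Noetherian-ring trick, and similarly $\phi\inv((L^1))=([F^\times])$---which is exactly the right justification for the equality of ideals the paper asserts implicitly.
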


To determine the topological structure of the homogeneous spectrum of $L^*$ (with the topology induced by the Zariski topology), we will need a small lemma:

\begin{lem}
If $R$ is a graded ring, the sets $D(q)=\{J\in \spec(R): q\not\in J\}$ restricted to homogeneous $q$ form a subbasis for the Zariski topology on $\spec^h(R)$.
\end{lem}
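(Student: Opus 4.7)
The plan is to show that every basic open set $D(q) \cap \spec^h(R)$ in the subspace topology on $\spec^h(R)$ can be written as a finite union of sets of the form $D(q') \cap \spec^h(R)$ with $q'$ homogeneous. Since the sets $D(q) \cap \spec^h(R)$ for arbitrary $q \in R$ already form a subbasis (by definition of the subspace topology), this would show that restricting to homogeneous $q$ generates the same topology, proving the lemma.

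To carry this out, I would start with an arbitrary $q \in R$ and write $q = q_1 + \cdots + q_n$ as a sum of its nonzero homogeneous components. The crux of the argument is then the standard fact that a homogeneous ideal $J$ (i.e., an ideal generated by homogeneous elements, per the definition in the paper) contains $q$ if and only if it contains each $q_i$. The forward direction is the nontrivial part: writing an element of $J$ as an $R$-linear combination of homogeneous generators and collecting terms by degree shows each homogeneous component of that element again lies in $J$. This uses only the grading, not commutativity, and so applies equally well in the $\epsilon$-commutative setting relevant to $\kmw*$.

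Taking contrapositives, for any $J \in \spec^h(R)$ we have $q \notin J$ if and only if some $q_i \notin J$, which gives the set equality
\[
D(q) \cap \spec^h(R) \;=\; \bigcup_{i=1}^{n} \bigl(D(q_i) \cap \spec^h(R)\bigr).
\]
Thus each element of the full subspace subbasis is a finite union of elements of the proposed homogeneous subbasis, so the two generate the same topology.

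I do not anticipate any obstacle: the content reduces entirely to the degree-decomposition property of homogeneous ideals, which is purely formal once one has recorded the definition of $\spec^h(R)$ used in the paper.
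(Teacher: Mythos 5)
Your proof is correct and follows essentially the same route as the paper's: decompose $q$ into homogeneous components and use the fact that a homogeneous prime contains $q$ iff it contains each component to rewrite $D(q)$ as a finite union of $D(q_i)$. You simply make explicit the degree-decomposition lemma that the paper uses implicitly, and your note that the argument is insensitive to $\epsilon$-commutativity is a helpful observation but does not change the substance.
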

\begin{proof} Consider $q=q_1+\cdots +q_n\in R$, where the $q_i$ are homogeneous. Then,
\begin{align*}
D(q)&=\{J\in \spec^h(R): q\not\in J\} \\
\; &=\{J\in \spec^h(R): \exists i\;(q_i\not\in J)\} \\
\; &=\bigcup_{1\leq i\leq n} \{J\in \spec^h(R): q_i\not\in J\} \\
\; &=\bigcup_{1\leq i \leq n} D(q_i).
\end{align*}
So the $D(q_i)$ generate the same topology as the $D(q)$. \end{proof}

Much of the topological information about $\spec^h(L^*)$ is coded by the topological structure of its minimal ideals. Let $\mspc$ denote the subspace of these minimal ideals.

\begin{thm} 
The minimum homogeneous spectrum $\mspc(L^*)$ is homeomorphic to $X_F$ with the Harrison topology.
\end{thm}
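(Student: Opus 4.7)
The plan is to exhibit the bijection $\alpha \mapsto J_\alpha^+$ between $X_F$ and $\mspc(L^*)$ and then check that the Zariski subbasis on $\mspc(L^*)$ agrees with the Harrison subbasis on $X_F$.

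The set-theoretic bijection is essentially packaged in the two preceding theorems. Every homogeneous prime contains some $J_\alpha^+$, and each of $(J_\alpha^+, p)$, $(J_\alpha^+, 2, \eta)$, $(L^1)$, and $(L^1, \eta)$ strictly contains $J_\alpha^+$, so the $J_\alpha^+$ are the only candidates for minimal primes. The witness $\ip{1,a}\eta\inv$ built at the end of the proof of Theorem \ref{orderidealsinl} (choosing $a$ on which $\alpha$ and $\beta$ disagree) in fact lies in $J_\beta^+ \setminus J_\alpha^+$, so distinct $J_\alpha^+$ are incomparable and hence all minimal. This yields the bijection $\alpha \leftrightarrow J_\alpha^+$.

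For the topology, I would invoke the subbasis lemma: the subspace topology on $\mspc(L^*)$ is generated by the sets $D(q) \cap \mspc(L^*)$ as $q$ runs over homogeneous elements of $L^*$. Writing $q = w\eta^{-n}$ with $w \in I^n$ (and $I^n = W(F)$ for $n \leq 0$), and using that $R \cong \Z[\eta,y]/(\eta y - 2)$ is a domain in which $\eta$ is a nonzerodivisor, one checks that $q \in J_\alpha^+$ iff $\sgn_\alpha(w) = 0$. Hence under the bijection,
\[
D(q) \cap \mspc(L^*) \;\longleftrightarrow\; \{\alpha \in X_F : \sgn_\alpha(w) \neq 0\}.
\]
Expanding $w = \sum_i n_i \ip{a_i}$ realizes $\alpha \mapsto \sgn_\alpha(w) = \sum_i n_i \alpha(a_i)$ as a continuous $\Z$-valued function on $X_F$ (it factors through a finite power of $\{\pm 1\}$), so every Zariski subbasic open is Harrison open.

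For the reverse inclusion, each Harrison subbasic open $H(a) = \{\alpha : \alpha(a) = 1\}$ is hit by a single Zariski subbasic open: taking $q = \ip{1,a}\eta\inv$, which lies in $L^1$ because $\ip{1,a}$ has even dimension and hence belongs to $I$, we get $\sgn_\alpha(\ip{1,a}) = 1 + \alpha(a)$, which is nonzero precisely when $\alpha(a) = 1$. Combining both inclusions yields the homeomorphism. The step that requires the most care is the minimality verification — showing pairwise incomparability of the $J_\alpha^+$ — but this is immediate from the witness already constructed in Theorem \ref{orderidealsinl}.
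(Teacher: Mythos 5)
Your proposal is correct and follows essentially the same route as the paper: identify $\mspc(L^*)$ with $\{J_\alpha^+\}$, show continuity of $\alpha\mapsto J_\alpha^+$ by expressing a homogeneous $q$ as $w\eta^n$ and invoking continuity of the total signature $\sgn(w)$, and show openness by matching each Harrison subbasic $H(a)$ with the Zariski subbasic coming from $\ip{1,a}$ (you use $\ip{1,a}\eta\inv$, the paper uses $\ip{1,a}$, but these cut out the same set of minimal primes). The only difference is that you spell out the minimality and incomparability of the $J_\alpha^+$ slightly more explicitly than the paper, which records this as ``from previous computation.''
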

\begin{proof}
From previous computation, $\mspc(L^*)=\{J^\alpha_+:\alpha\in X_f\}$. We will show that the obvious bijection $\sigma_0$ (given by $\alpha\mapsto J_\alpha^+$) is a homeomorhism.

To see that $\sigma_0$ is continuous, consider the subbasic open set $D(q)$ in $\mspc(L^*)$, where $q$ is homogeneous. We have that $q=\tilde q\eta^n$ for some $\tilde q\in W(F)$.
\begin{align*}
\sigma_0\inv(D(q))&=\{\alpha: \sgn_\alpha^+(q)\not=0\} \\
\;&=\{\alpha: \sgn_\alpha(\tilde q)\not=0\} \\
\; &=\sgn(\tilde q)\inv (\Z\setminus \{0\}),
\end{align*}
where $\sgn(\tilde q):X_F\rightarrow \Z$ is the total signature given by $\sgn(\tilde q)(\alpha)=\alpha(\tilde q)$. Note that if $\tilde q=\ip{a_1,...,a_n}$, then $\sgn(\tilde q)=\sum_{1\leq i\leq n}\sgn(\ip{a_i})$. Giving $\Z$ the discrete topology, each $\sgn(\ip{a_i})$ is continuous by the definition of the Harrison topology, so $\sgn(\tilde q)$ is a sum of continuous functions and thus continuous. So, the set $\sgn(\tilde q)\inv (\Z\setminus\{0\})$ is open in $X_F$.

To see that $\sigma_0$ is open, consider the subbasic open set $H(a)=\{\alpha: \alpha(a)=1\}$ in $X_F$: 
\begin{align*}
\sigma_0(H(a))&=\{J_\alpha^+:\alpha(a)=1\} \\
\; &=\{J_\alpha^+: \sgn^+_\alpha(\ip{1,a})\not=0\} \\
\; &=D(\ip{1,a}),
\end{align*}
which is open.\end{proof}

A fuller description of the topology is subsumed by the description of the topology on $\spec^h(\kmw*)$.

\subsection{Determination of $\spec^h(\kmw*/(\eta))$} \label{subsec:kmw/eta}

Again, note that $\kmw*/(\eta)$ is isomorphic to $\km*$. The relevant arithmetical facts about this ring are established in Milnor's original paper \cite{Mi}. As with the quotient by $(h)$, the spectrum of the quotient by $(\eta)$ is described in terms of orderings. Efrat studies the relation between orderings and quotients of $\km*$ in \cite{Ef}, and his results are closely related to this computation.

We can immediately reduce the computation of the spectrum to the case where the ideals are of characteristic $2$.
\begin{lem}
\[\spec^h(\km*)\setminus \{J: 2\in J\}=\{(\km1, p): p\mbox{ an odd prime or }0\}\]
\end{lem}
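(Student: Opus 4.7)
The plan is to show that any homogeneous prime $J \in \spec^h(\km*)$ with $2 \notin J$ must contain the entire degree-one piece $\km 1$, and then to classify the remaining possibilities by computing the quotient $\km*/(\km 1)$.

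The key input is the classical graded-commutativity of Milnor $K$-theory (see \cite{Mi}): for any $a, b \in F^\times$, one has $[a][b] = -[b][a]$ in $\km 2$. Specializing $b = a$ gives the identity $2[a]^2 = 0$ in $\km 2$ for every $a \in F^\times$. This is the only arithmetic fact we need.

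Now suppose $J \in \spec^h(\km*)$ with $2 \notin J$. Since $2[a]^2 \in J$ and $J$ is prime, we must have $[a]^2 \in J$, and applying primeness once more yields $[a] \in J$ for every $a \in F^\times$. Since the symbols $[a]$ generate $\km 1$ as an abelian group and $\km*$ is generated in degree one as a ring, we conclude $\km 1 \subseteq J$, and hence $(\km 1) \subseteq J$. Passing to the quotient, $\km*/(\km 1)$ is concentrated in degree zero where it equals $\Z$ (recall $\km n = 0$ for $n < 0$ since every negative-degree element of $\kmw*$ is a multiple of $\eta$). The homogeneous spectrum of $\Z$ is just $\spec(\Z) = \{(0)\} \cup \{(p) : p \text{ prime}\}$, and discarding those containing $2$ leaves precisely $(0)$ and $(p)$ for odd primes $p$. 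Pulling back along the quotient $\km* \twoheadrightarrow \km*/(\km 1)$ yields exactly the ideals $(\km 1, p)$ for $p$ an odd prime or $0$, as claimed.

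I do not anticipate a serious obstacle here: the argument is a one-line reduction via $2[a]^2 = 0$ together with the trivial observation that collapsing $\km 1$ reduces the problem to $\spec(\Z)$. The only point requiring a touch of care is to confirm that $\km*$ really is generated in degree one over $\km 0 = \Z$, so that killing $\km 1$ kills everything in positive degree; this is immediate from the defining presentation.
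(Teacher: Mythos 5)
Your proof is correct and follows essentially the same route as the paper: show $\km 1 \subseteq J$ using primeness of $J$ together with a $2$-torsion relation in $\km*$, then reduce to $\spec(\Z)$ via $\km*/(\km 1) \cong \Z$. The only cosmetic difference is the choice of identity: the paper first derives $[-1] \in J$ from $2[-1] = 0$ and then invokes Milnor's relation $[a][a] = [a][-1]$ to conclude $[a]^2 \in J$, whereas you go directly through $2[a]^2 = 0$ from graded-commutativity; these are equivalent consequences of the Steinberg relation, so the substance is identical.
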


\begin{proof}
Suppose $J\in \spec^h(\km*)$ and $\chr(J)\not=2$. Then, since $2[-1]=0$, $[-1]\in J$. We then have, for all $a\in F^\times$
\[[a,a]\equiv[a,-1]\equiv 0\; (\mbox{mod }J).\]
So $\km1\subset J$. Since $\km*/(\km1)\cong \Z$, $J$ is determined by its characteristic.
\end{proof}

For the homogeneous prime ideals in $\km*/(2)$, one could simply rely on the now resolved Milnor conjecture and move the characterization of ideals in $L^*$ through the quotient by $I$, but the story here is of independent interest.

\begin{thm}
Every element of $\spec^h(\km*/(2))$ is either $(\km1)$, or $([P_{\alpha}])$ for some order $\alpha$.
\end{thm}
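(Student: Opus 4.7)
The plan is to split on whether $J$ contains all of $\km 1$. If $\km 1 \subseteq J$, then $\km*/(2,\km 1) \cong \F_2$ is a field, so $J = (\km 1)$; this handles that case immediately. The real work lies in the other case, where I will produce an ordering $\alpha$ on $F$ directly from $J$ by declaring $\alpha(a) = 1$ iff $[a] \in J$, and then verify $J = ([P_\alpha])$.

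To check that $\alpha$ is an ordering, I would verify the three defining properties in turn. Multiplicativity $\alpha(ab) = \alpha(a)\alpha(b)$ follows from the twisted-log relation $[ab] = [a] + [b]$ in $\km*$ (inherited from $\kmw*$ modulo $\eta$), which makes $J \cap \km 1$ a subgroup of $\km 1 \cong F^\times$. Surjectivity onto $\{\pm 1\}$, equivalently $\alpha(-1) = -1$, follows from the Milnor identity $[a]^2 = [a][-1]$ valid in $\km*/(2)$: otherwise $[-1] \in J$ would force $[a]^2 \in J$ for every $a$ and hence $[a] \in J$ by primality, returning us to the first case. Additivity is the Steinberg step: for $s = a+b \ne 0$ with $\alpha(a) = \alpha(b) = 1$, I would apply $[a/s][b/s] = 0 \in J$ (since $a/s + b/s = 1$), then use $[x/s] = [x] - [s]$ in $\km 1$ to conclude $[s] \in J$ no matter which Steinberg factor lands in $J$.

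For the identification $J = ([P_\alpha])$, the inclusion $\supseteq$ is by construction. For $\subseteq$, I would take any homogeneous $x \in J \cap \km n$, expand it as a sum of pure symbols $[a_{i,1}]\cdots[a_{i,n}]$, and reduce modulo $([P_\alpha])$. Each factor $[a]$ becomes either $0$ (when $a \in P_\alpha$) or $[-1]$ (when $a \notin P_\alpha$, since then $-a \in P_\alpha$ and $[a] = [-a]+[-1]$). Thus $x$ is congruent to $m \cdot [-1]^n$ modulo $([P_\alpha])$ for some $m \in \F_2$; if $m = 1$ then $[-1]^n \in J$, forcing $[-1] \in J$ by primality, which contradicts the surjectivity step above. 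So $m = 0$ and $x \in ([P_\alpha])$, finishing the case analysis.

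I expect the main obstacle to be the additivity step, since it requires carefully combining the Steinberg relation with the group-homomorphism structure of $\km 1$ rather than falling out from a single manipulation; the remaining verifications are essentially routine applications of the defining relations of $\km*$ modulo $2$ together with primality of $J$.
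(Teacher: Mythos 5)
Your proof takes the same route as the paper's---read off $\alpha$ from $J$ via $\alpha(a) = 1 \Leftrightarrow [a] \in J$, check the ordering axioms, and show $J = ([P_\alpha])$---and you are in fact more scrupulous than the paper in spelling out the reverse inclusion $J \subseteq ([P_\alpha])$. But the multiplicativity step has a real gap. That $J \cap \km 1$ is an additive subgroup (automatic for any ideal, with or without the log relation) does not make $\alpha$ a homomorphism to $\{\pm 1\}$: you also need $J \cap \km 1$ to have index at most $2$ in $\km 1/(2)$. The problematic case is $[a], [b] \notin J$; nothing about the subgroup structure alone forces $[ab] = [a] + [b]$ into $J$. (In an $\F_2$-vector space, the indicator of the complement of a subspace of codimension $\ge 2$ is not a character.)

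The missing ingredient is exactly the Milnor/Steinberg relation $[a][-a] = 0$ (equivalently, mod $2$, the identity $[a]^2 = [a][-1]$ that you cite for surjectivity): primality of $J$ then gives $[a] \in J$ or $[-a] = [a] + [-1] \in J$ for every $a$, so $\km 1 /(2) = (J \cap \km 1) + \F_2 [-1]$, yielding the needed index bound and, once $[-1] \notin J$ is known, index exactly $2$. This is the very first observation the paper's proof makes, before $\alpha$ is even introduced; in your sketch it appears only in the surjectivity check, where it is doing less work than it should. Everything else is correct: the Steinberg manipulation $[a/s][b/s] = 0$ with $s = a+b$ for additivity is a cosmetic variant of the paper's $[1 + ba^{-1}][-ba^{-1}] = 0$ (and $s \neq 0$ does follow from the multiplicativity and $\alpha(-1) = -1$ you establish first), and the reduction $x \equiv m[-1]^n \pmod{([P_\alpha])}$ cleanly closes the final inclusion that the paper leaves implicit.
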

\begin{proof}
Suppose $J\in \spec^h(\km*/(2))$ and $J\not=(\km1)$. We have that, for all $a\in F^\times$
\[[a,-a]=0\]
so, either $[a]$ or $[-a]=[-1]+[a]$ is in $J$. Thus, for some function $\alpha\in\{\pm 1\}^{F^\times}$,
\[[a]\equiv [\alpha(a)] \mbox{(mod $J$)}.\]
Since $J$ is assumed to not contain $\km1$, it follows that $[-1]\not\in J$, and $\alpha$ is onto. We will show that $\alpha$ is an ordering. First, $\alpha$ is multiplicative:
\[[ab]=[a]+[b]\equiv[\alpha(a)]+[\alpha(b)]=[\alpha(a)\alpha(b)]\mbox{(mod $J$)}.\]
And, $\alpha$ is additive. Suppose $\alpha(a)=\alpha(b)=1$:
\[[a+b][-ba\inv]=[1+ba\inv][-ba\inv]+[a][-ba\inv]\equiv 0\mbox{(mod $J$.)}\]
Since $[-1]\not\in J$, $[a+b]\in J$, and $\alpha(a+b)=1$.
\end{proof}

Efrat has studied ordering and prime ideals in $\km*$ using a kind of quotient construction $\km*/G$ where $G$ is a subgroup of $F^\times$ \cite{Ef}. It is worth noting that, in positive degrees, passing to the quotient by $([P_\alpha])$ is equivalent to passing to $\km*/S$, where $S$ is the subgroup of $\alpha$-positive elements. (In degree $0$, Efrat's construction is $\Z$, where our quotient is $\Z/2\Z$.)

The topology of the minimum spectrum is reducible to the Harrison topology.

\begin{thm}
$\mspc(\km*)$ is homeomorphic to $X_F\amalg\{(\km1)\}$.
\end{thm}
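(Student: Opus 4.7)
The plan is first to pin down $\mspc(\km*)$ as a set, then define the obvious bijection with $X_F\amalg\{*\}$ and verify that it is a homeomorphism by chasing subbasic open sets in both directions.

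Collating the two preceding theorems, the full list of homogeneous primes in $\km*$ is $(\km1)$, $(\km1,p)$ for $p$ an odd prime, $(\km1,2)$, and $([P_\alpha],2)$ for $\alpha\in X_F$. Each of the first three families either equals or strictly contains $(\km1)$, so the only candidates for minimality are $(\km1)$ itself and the $([P_\alpha],2)$. These are pairwise incomparable: $(\km1)$ has characteristic $0$ whereas $([P_\alpha],2)$ has characteristic $2$; and for $\alpha\ne\beta$ any $a\in P_\alpha\setminus P_\beta$ witnesses $[a]\in([P_\alpha])$ but $[a]\equiv[-1]\not\equiv 0\pmod{([P_\beta],2)}$ (using the quotient description from the previous theorem). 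Thus $\mspc(\km*)=\{(\km1)\}\sqcup\{([P_\alpha],2):\alpha\in X_F\}$, and we define $\sigma:X_F\amalg\{*\}\to\mspc(\km*)$ by $\sigma(\alpha)=([P_\alpha],2)$ and $\sigma(*)=(\km1)$.

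For continuity, by the subbasis lemma it suffices to verify that $\sigma\inv(D(q))$ is open in $X_F\amalg\{*\}$ for each homogeneous $q$. The degree zero case $q\in\Z$ is immediate: $D(q)\cap\mspc(\km*)$ is $\emptyset$, $\{*\}$, or all of $\mspc(\km*)$ according as $q=0$, $q$ nonzero even, or $q$ odd. In positive degree $n$, write $q=\sum_i[a_{i,1},\dots,a_{i,n}]$. From the proof of the previous theorem, $\km*/([P_\alpha],2)$ is generated as an $\F_2$-algebra by $[-1]$, and the symbol $[a_{i,1},\dots,a_{i,n}]$ maps to $[-1]^n$ if all $\alpha(a_{i,j})=-1$ and to $0$ otherwise. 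Hence $q\not\in([P_\alpha],2)$ iff the $\F_2$-count $\sum_i\prod_j\mathbf{1}[\alpha(a_{i,j})=-1]\equiv 1\pmod 2$, a condition depending on only finitely many coordinates of $\alpha$ and therefore clopen in the Harrison topology. Moreover $(\km1)\in D(q)$ never holds for $n\ge 1$, so $\sigma\inv(D(q))$ is open.

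For openness, it suffices to produce, for each subbasic open set in $X_F\amalg\{*\}$, a homogeneous $q$ realizing it as $D(q)\cap\mspc(\km*)$. Using $\alpha(-1)=-1$, so $\alpha(-a)=-\alpha(a)$, one checks $\sigma(H(a))=D([-a])\cap\mspc(\km*)$; and the isolated point is $\sigma(\{*\})=\{(\km1)\}=D(2)\cap\mspc(\km*)$ since $2\notin(\km1)$ but $2\in([P_\alpha],2)$ for every $\alpha$.

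The main technical step is the continuity argument for $q$ of positive degree; everything hinges on the fact that the quotient $\km*/([P_\alpha],2)$ is freely generated (in positive degree) by $[-1]$, which reduces membership in $([P_\alpha],2)$ to a finite combinatorial condition on the signs $\alpha(a_{i,j})$ and thereby converts $\sigma\inv(D(q))$ into a clopen subset of $\{\pm1\}^{F^\times}$ restricted to $X_F$.
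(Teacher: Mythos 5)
Your proof is correct, and it follows the same overall skeleton as the paper's (identify the set $\mspc(\km*)$, define the obvious bijection, check openness and continuity on subbases), with the openness step being essentially identical ($\sigma(H(a))=D([-a])\cap\mspc(\km*)$ and $\{(\km1)\}=D(2)\cap\mspc(\km*)$). The difference is in the continuity step. The paper computes that $q\equiv m_\alpha[-1]^n$ modulo $([P_\alpha],2)$ with $m_\alpha=|\{j:\forall i\;\alpha(a_{ji})=-1\}|$, but then detours through the Milnor-map identification with Pfister forms: it expresses $\{\alpha:m_\alpha\text{ odd}\}$ as $\sgn\bigl(\sum_j\pf{-a_{j1},\dots,-a_{jn}}\bigr)^{-1}(2^n\Z\setminus 2^{n+1}\Z)$ and invokes continuity of the total signature. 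You reach the same parity condition $\sum_i\prod_j\mathbf{1}[\alpha(a_{i,j})=-1]\equiv 1\pmod 2$ directly and then observe that this condition depends on only finitely many coordinates $\alpha(a_{i,j})$ of $\alpha\in\{\pm1\}^{F^\times}$, so it is clopen in the Harrison topology by definition. That bypasses the Pfister-form machinery entirely; it is shorter and more self-contained, at the cost of being slightly less parallel to the analogous computation for $L^*$ (where the signature argument is the natural one). Both arguments implicitly use the fact that $[-1]^n\ne 0$ in $\km*/([P_\alpha],2)$ --- in your write-up this is the "freely generated by $[-1]$" assertion --- which follows from the earlier theorem because $([P_\alpha],2)$ is prime and $[-1]\notin([P_\alpha],2)$; it would be worth making that one-line justification explicit rather than stating freeness as an unexamined fact.
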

\begin{proof}
 Let $\sigma_2:X_F\rightarrow \mspc(\km*)$ be the obvious injection, $\sigma_2(\alpha)=([P_\alpha],2)$. From previous computation, $\mspc(\km*)=\sigma_2(X_F)\sqcup \{(\km1)\}$. We have that $D([-1])=\sigma_2(X_F)$ and $D(2)=\{(\km1)\}$. So, it suffices to show that $\sigma_2$ is continuous and open.

First, $\sigma_2$ is open:
\begin{align*}
\sigma_2(H(a))&=\{\sigma_2(\alpha): \alpha(a)=1\} \\
\; &=\{\sigma_2(\alpha): \alpha(-a)=-1\} \\
\;&=\{\sigma_2(\alpha): [-a]\not\in \sigma_2(\alpha)\} \\
\;&=D([-a])
\end{align*}

And, $\sigma_2$ is continuous. Consider some homogeneous $q=\sum_{j} [a_{j1},...,a_{jn}]$. For any $\alpha\in X_F$, we have that
\[q\equiv m_{\alpha}[-1]^n \;(\mbox{mod }\sigma_2(\alpha))\]
where $m_\alpha=|\{j: \forall i\; (\alpha(a_{ji})=-1)\}|$. We also have
\[\sgn_\alpha\left(\sum_j\pf{-a_{j1},...,-a_{jn}}\right)=m_\alpha 2^n,\]
where $\ip{\ip{-a_{j1},...,-a_{jn}}}=\prod_{1\leq i\leq n} \ip{1,-a_{ji}}$ is the Pfister form\footnote{According to at least one convention for Pfister forms.} associated to $-a_{j1},...,-a_{jn}$. Putting these together, we get
\begin{align*}
\sigma_2\inv(D(q))&=\{\alpha: 2\not| m_\alpha\}\\
\;&=\sgn\left(\sum_j\pf{-a_{j1},...,-a_{jn}}\right)\inv(2^n\Z\setminus 2^{n+1}\Z)
\end{align*}
which is open.
\end{proof}

\subsection{Determination of $\spec^h(\kmw*)$} \label{subsec:spec}

All that remains is to piece the spectra of the quotients together.

\begin{thm}
Let $J\in \spec^h(\kmw*)$. Then, $J$ is exactly one of the following:
\begin{enumerate}
\item $([P_\alpha],h,p)$ for some $\alpha\in X_F, p$ an odd prime,
\item $([P_\alpha],h)$ for some $\alpha\in X_F$,
\item $([P_\alpha],2,\eta)$ for some $\alpha\in X_F$,
\item $([F^\times], p, \eta)$ for some $p$ an odd prime,
\item $([F^\times], \eta)$,
\item $([F^\times], 2)$, or
\item $([F^\times], 2, \eta)$.
\end{enumerate}
\end{thm}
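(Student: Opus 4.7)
My proof plan rests on the Witt relation $h\eta = 0$, which forces every $J \in \spec^h(\kmw*)$ to contain at least one of $h$ or $\eta$. This decomposes the homogeneous spectrum as the union $V(h)\cup V(\eta)$, where $V(h) = \spec^h(\kmw*/(h))$ and $V(\eta) = \spec^h(\kmw*/(\eta))$, both pulled back to $\kmw*$. Since these two pieces were classified in subsections \ref{subsec:kmw/h} and \ref{subsec:kmw/eta}, it suffices to translate each list to the language of $\kmw*$, spot the overlap, and verify distinctness.

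First I would lift the corollary of \textsection\ref{subsec:kmw/h}: the five families $([P_\alpha])$, $([P_\alpha],p)$, $([P_\alpha],2,\eta)$, $([F^\times])$, $([F^\times],\eta)$ in $\kmw*/(h)$ pull back respectively to $([P_\alpha],h)$, $([P_\alpha],h,p)$, $([P_\alpha],h,2,\eta)$, $([F^\times],h)$, $([F^\times],h,\eta)$ in $\kmw*$. Using $h = 2 + \eta[-1]$ and the fact that $[-1] \in [F^\times]$, one simplifies $([F^\times],h) = ([F^\times],2)$ and $([F^\times],h,\eta) = ([F^\times],2,\eta)$. Likewise, since $h \in (2,\eta)$, the third family collapses to $([P_\alpha],2,\eta)$.

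Second I would lift the computation of \textsection\ref{subsec:kmw/eta}: the primes of $\km* \cong \kmw*/(\eta)$ are $(\km1,p)$ for $p$ an odd prime or $0$, $(\km1,2) = ([F^\times],2)$, and $([P_\alpha],2)$ (well, $([P_\alpha])$ in the quotient by $(2)$, pulled back to $\km*$ as $([P_\alpha],2)$). Pulling these back to $\kmw*$ by adjoining $\eta$ gives the families $([F^\times],p,\eta)$, $([F^\times],\eta)$, $([F^\times],2,\eta)$, and $([P_\alpha],2,\eta)$. Taking the union of the two lists and removing the duplicates $([F^\times],2,\eta)$ and $([P_\alpha],2,\eta)$, which appear in both $V(h)$ and $V(\eta)$, yields exactly the seven families in the theorem.

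The remaining task is distinctness. Within each of $V(h)$ and $V(\eta)$ this is already handled by Theorem \ref{orderidealsinl} and the analogous statements in \textsection\ref{subsec:kmw/eta}, so what is new is checking that no family from the first list coincides with a family from the second. This is immediate from the characteristics and from whether $h$ or $\eta$ lies in the ideal: types (1)--(3) contain $h$ but not $\eta$ (so $2 \notin J$ except for (3)), while types (4)--(7) contain $\eta$; and the degree-zero quotient $\kmw0/J \subseteq GW(F)/J_0$ separates the orderings via signatures, as in the proof of Theorem \ref{orderidealsinl}. I expect the main obstacle to be largely bookkeeping rather than mathematical — in particular, one needs to be careful that the identifications $h \equiv 2 \pmod{[F^\times]}$ and $h \in (2,\eta)$ are used correctly so that no spurious extra ideals appear and no legitimate ones are missed.
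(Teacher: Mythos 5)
Your proposal is correct and takes essentially the same route as the paper: use the Witt relation $h\eta=0$ to decompose $\spec^h(\kmw*)$ into the pullbacks of $\spec^h(\kmw*/(h))$ and $\spec^h(\kmw*/(\eta))$, translate the two classified lists, and remove the overlap; the paper's own proof is a terse version of exactly this bookkeeping. One small slip in your distinctness remark: type (3), $([P_\alpha],2,\eta)$, does contain $\eta$ (it is a generator, and $h=2+\eta[-1]\in(2,\eta)$ too) --- which is precisely why it appears in both pulled-back lists, as you note earlier --- so the claim that types (1)--(3) ``contain $h$ but not $\eta$'' should be restricted to types (1)--(2).
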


\begin{proof}
Note that these ideals are exactly those which arise by pulling the spectra of the quotients back along the quotient maps. An ideal in $\kmw*/(\eta)$ contains $h$ if and only if it contains 2, so if it is one of
\[([F^\times],2)\mbox{ or } ([P_\alpha],2)\mbox{ for some }\alpha\in X_F.\]

These pull back to $([F^\times], 2,\eta)$ and $([P_\alpha], 2, \eta)$, which descend in $\kmw*/(h)$ to
\[([F^\times], \eta)\mbox{ or }([P_\alpha], 2, \eta).\]

\end{proof}

The Hasse diagram for the inclusion poset of these seven types of ideals is given below.
\[
\xymatrixcolsep{.05pc}\xymatrix{
 & & & ([F^\times],2,\eta)  \ar@{-}[ld] \ar@{-}[dd] \ar@{-}[rd] \ar@{-}[rrrd] & &([F^\times],\eta,3) \ar@{-}[rd] &([F^\times],\eta,5) \ar@{-}[d]^{...} \\
 & &([P_\alpha],\eta,2)\ar@{-}[dd] & ... &([P_\beta],\eta,2)\ar@{-}[dd]& &([F^\times],\eta)\\
([P_\alpha],h,5)\ar@{-}[rrd]_{...} &([P_\alpha],h,3)\ar@{-}[rd] & &([F^\times],2) \ar@{-}[ld] \ar@{-}[rd] & &([P_\beta],h,3)\ar@{-}[ld] &([P_\beta],h,5)\ar@{-}[lld]^{...}\\
 & & ([P_\alpha],h)& ... &([P_\beta],h)}
\]

All that is left to prove Theorem \ref{maintheorem} is to note that the inclusion maps 
\[\spec^h(\kmw*/(h))\rightarrow \spec^h(\kmw*)\] and 
\[\spec^h(\kmw*/(\eta))\rightarrow\spec^h(\kmw*)\] are homeomorphisms onto their images. Topologically, then, the spectrum of $\kmw*$ is an $X_F$-like spray of copies of the rational primes, with the prime 2 tripled, all glued together at two of the copies of 2, along with another copy of the rational primes at the ``top" of the diagram, with the closure of a set given by its upward closure in the inclusion poset pictured above.

\printbibliography

\end{document}